\providecommand{\U}[1]{\protect\rule{.1in}{.1in}}
\theoremstyle{plain}
\newtheorem{definition}{Definition}
\newtheorem{lemma}{Lemma}
\newtheorem{theorem}{Theorem}
\numberwithin{equation}{section}
\begin{document}
\title{ Dunkl-Gamma Type Operators including Appell Polynomials}
\author{Fatma Ta\c{s}delen}
\address{Ankara University, Faculty of Science, Department of Mathematics, 06100,
Tando\u{g}an, Ankara, Turkey}
\email{tasdelen@science.ankara.edu.tr}
\author{Dilek S\"{o}ylemez}
\address{Ankara University, Department of Computer Programming, Elmadag Vocational
School, Ankara, Turkey}
\email{dsoylemez@ankara.edu.tr}
\author{Rabia Akta\c{s}}
\address{Ankara University, Faculty of Science, Department of Mathematics, 06100,
Tando\u{g}an, Ankara, Turkey}
\email{raktas@science.ankara.edu.tr}
\subjclass[2000]{Primary 41A25, 41A36; Secondary 33C45}
\keywords{Dunkl exponential, Appell polynomial, Gamma function, modulus of continuity,
Peetre's K-functional, Lipschitz class.}

\begin{abstract}
The aim of the present paper is to introduce Dunkl-Gamma type operators in
terms of Appell polynomials and to investigate approximating properties of
these operators.

\end{abstract}
\maketitle

\section{Introduction}

Recently, linear positive operators constructed via generating functions and
their further extentions are intensively studied by many research authors, for
example, we refer the readers to \cite{ismail, J-L, kajla-agrawal, MMS,
mursaleen-ensari2015, olgun-ince-tasdelen,tasdelen-aktas-altin, VIS, VTT}. In
\cite{J-L}, Jakimovski et al. introduced linear positive operators in terms
of Appell polynomials as follows%
\begin{equation}
\left(  T_{n}f\right)  \left(  x\right)  =\frac{e^{-nx}}{g\left(  1\right)
}\sum\limits_{k=0}^{\infty}p_{k}\left(  nx\right)  f\left(  \frac{k}{n}\right)
\label{*}%
\end{equation}
under the assumption
\begin{equation}
\dfrac{a_{k}}{g\left(  1\right)  }\geq0,~k=0,1,2,...~\label{**}%
\end{equation}
where $g\left(  1\right)  \neq0.$ Here, Appell polynomials $p_{k}\left(
x\right)  $ are generated by%
\[
g\left(  t\right)  e^{xt}=\sum\limits_{k=0}^{\infty}p_{k}\left(  x\right)
t^{k},
\]
where $g\left(  t\right)  $ is an analytic function in the disc $\left\vert
t\right\vert <R~~(R>1)$%
\[
g\left(  t\right)  =\sum\limits_{r=0}^{\infty}a_{r}t^{r}~~,~~a_{0}\neq0
\]
(see \cite{C}). In \cite{CI}, Ciupa defined the following Durrmeyer type
integral modification of the operators (\ref{*})
\begin{equation}
\left(  P_{n}f\right)  \left(  x\right)  =\frac{e^{-nx}}{g\left(  1\right)
}\sum\limits_{k=0}^{\infty}p_{k}\left(  nx\right)  \frac{n^{\lambda+k+1}%
}{\Gamma\left(  \lambda+k+1\right)  }\int\limits_{0}^{\infty}e^{-nt}%
t^{\lambda+k}f\left(  t\right)  dt\label{2}%
\end{equation}
under the assumption given by (\ref{**})$~$where $\lambda\geq0.$ Sucu
\cite{sucu} introduced Dunkl analogue of the Szasz operators by
\[
S_{n}^{\ast}\left(  f;x\right)  =\frac{1}{e_{\mu}\left(  nx\right)  }%
\sum_{k=0}^{\infty}\frac{\left(  nx\right)  ^{k}}{\gamma_{\mu}\left(
k\right)  }f\left(  \frac{k+2\mu\theta_{k}}{n}\right)  ,\text{ }n\in%
\mathbb{N}%
\]
for any $x\in\left[  0,\infty\right)  ,$ $n\in\mathbb{N}$, $\mu\geq0$ and
$f\in C\left[  0,\infty\right)  $ by using Dunkl generalization of the
exponential function $e_{\mu}\left(  x\right)  $ defined by \cite{rosenblum}%
\[
e_{\mu}\left(  x\right)  =\sum_{k=0}^{\infty}\frac{x^{k}}{\gamma_{\mu}\left(
k\right)  },
\]
where the coefficients $\gamma_{\mu}$ are in the form
\begin{equation}
\gamma_{\mu}\left(  2k\right)  =\frac{2^{2k}k!}{\Gamma\left(  k+\mu+\frac
{1}{2}\right)  }\text{ and }\gamma_{\mu}\left(  2k+1\right)  =\frac
{2^{2k+1}k!\Gamma\left(  k+\mu+\frac{3}{2}\right)  }{\Gamma\left(  \mu
+\frac{1}{2}\right)  }\label{1b}%
\end{equation}
for $k\in%
\mathbb{N}
_{0},$ $\mu>-\frac{1}{2}.$ Moreover, the next recursion formula is satisfied%
\begin{equation}
\gamma_{\mu}\left(  k+1\right)  =\left(  k+1+2\mu\theta_{k+1}\right)
\gamma_{\mu}\left(  k\right)  ,\text{ }k\in%
\mathbb{N}
_{0},\label{1c}%
\end{equation}
where $\theta_{k}$ is%
\[
\theta_{k}=\left\{
\begin{array}
[c]{cc}%
0, & if\text{ }k=2p\\
1, & if\text{ }k=2p+1
\end{array}
\right.  .
\]
Now, let us recall the Dunkl derivative operator \cite{dunkl1991,dunkl1994}.
\textbf{\newline}Let $\mu$ be a real number satisfying $\mu>-\frac{1}{2}.$
The Dunkl operator $T_{\mu}$ is defined by%
\[
T_{\mu}~\phi\left(  x\right)  =\phi^{\prime}\left(  x\right)  +\mu\frac
{\phi\left(  x\right)  -\phi\left(  -x\right)  }{x},
\]
where $\phi\left(  x\right)  $ is an entire function. For $\mu=0,$ the
operator $T_{\mu}$ gives the derivative operator. It is clear that%
\begin{equation}
T_{\mu}e_{\mu}\left(  xt\right)  =te_{\mu}\left(  xt\right)  ,\label{a}%
\end{equation}%
\begin{equation}
T_{\mu}x^{n}=\frac{\gamma_{\mu}\left(  n\right)  }{\gamma_{\mu}\left(
n-1\right)  }x^{n-1}.\label{b}%
\end{equation}
Moreover, the Dunkl generalization of the product of two function is given by%
\begin{equation}
T_{\mu}\left(  fg\right)  \left(  x\right)  =f\left(  x\right)  T_{\mu
}g\left(  x\right)  +g\left(  -x\right)  T_{\mu}f\left(  x\right)  +f^{\prime
}\left(  x\right)  \left[  g\left(  x\right)  -g\left(  -x\right)  \right]
,\label{c}%
\end{equation}
which gives the next result if the function $g$ is an even function
\[
T_{\mu}\left(  fg\right)  \left(  x\right)  =f\left(  x\right)  T_{\mu
}g\left(  x\right)  +g\left(  x\right)  T_{\mu}f\left(  x\right)  .
\]

By the motivation this work, many authors studied Dunkl analogue of the
several approximation operators for example, we refer the readers to
\cite{aktas-cekim-tasdelen, ben cheikh2014, deshwal-agrawal-araci,
icoz-cekim2015, icoz, Rao-wafi-ana acu d}.

Wafi and Rao \cite{wafi-rao} constructed Dunkl analogue of Szasz Durrmeyer
operators as%
\begin{equation}
D_{n}\left(  f;x\right)  =\frac{1}{e_{\mu}\left(  nx\right)  }\sum
_{k=0}^{\infty}\frac{\left(  nx\right)  ^{k}}{\gamma_{\mu}\left(  k\right)
}\frac{n^{k+2\mu\theta_{k}+\lambda+1}}{\Gamma\left(  k+2\mu\theta_{k}%
+\lambda+1\right)  }%
{\displaystyle\int\limits_{0}^{\infty}}
e^{-nt}t^{^{k+2\mu\theta_{k}+\lambda}}f\left(  t\right)  dt\label{dunkl-s-d}%
\end{equation}
for any $\lambda\geq0,$ $x\in\left[  0,\infty\right)  ,$ $n\in\mathbb{N}$,
$\mu\geq0$ and $f\in C\left[  0,\infty\right)  .$ The authors also examined
pointwise approximation results in several functional spaces. They also
studied weighted approximation results and gave rate of convergence for
functions with derivative of bounded variation.

In \cite{ben cheikh 2007}, Ben Cheikh studied some properties of
Dunkl-Appell $d-$ortogonal polynomials. In that work, Dunkl-Appell polynomials
$p_{k}\left(  x\right)  $ defined by%
\[
p_{k}\left(  x\right)  =\sum\limits_{n=0}^{k}\binom{k}{n}_{\mu}a_{k-n}%
x^{n},\text{ }\left(  a_{k}\right)  _{k\geq0}%
\]
are generated by%

\begin{equation}
A\left(  t\right)  e_{\mu}\left(  xt\right)  =\sum_{k=0}^{\infty}\frac
{p_{k}\left(  x\right)  }{\gamma_{\mu}\left(  k\right)  }t^{k}%
,\label{dunkl-appell}%
\end{equation}
where $A\left(  t\right)  $ is an analytic function in the disc $\left\vert
t\right\vert <R~~(R>1)$%
\begin{equation}
A\left(  t\right)  =\sum\limits_{r=0}^{\infty}\frac{a_{r}}{\gamma_{\mu}\left(
r\right)  }t^{r},~~a_{0}\neq0\label{***}%
\end{equation}
and Dunkl-binomial coefficient is defined by%
\[
\binom{k}{n}_{\mu}=\frac{\gamma_{\mu}\left(  k\right)  }{\gamma_{\mu}\left(
n\right)  \gamma_{\mu}\left(  k-n\right)  }.
\]
Note that $\gamma_{0}\left(  k\right)  =k!$ and $\binom{k}{n}_{0}=\binom{k}%
{n}.$ \newline

Inspired by the above works, for any $x\in\left[  0,\infty\right)  ,~f\in
C\left[  0,\infty\right)  ,$ we introduce Dunkl analogue of the Appell Szasz
Durrmeyer operators as%

\begin{equation}
D_{n}^{\ast}\left(  f;x\right)  =\frac{1}{e_{\mu}\left(  nx\right)  A\left(
1\right)  }\sum_{k=0}^{\infty}\frac{p_{k}\left(  nx\right)  }{\gamma_{\mu
}\left(  k\right)  }\frac{n^{k+2\mu\theta_{k}+\lambda+1}}{\Gamma\left(
k+2\mu\theta_{k}+\lambda+1\right)  }%
{\displaystyle\int\limits_{0}^{\infty}}
e^{-nt}t^{^{k+2\mu\theta_{k}+\lambda}}f\left(  t\right)
dt,\label{appell-dunkl-s-d}%
\end{equation}
where $\mu,\lambda\geq0,~A\left(  1\right)  \neq0,~~\frac{a_{k-n}}{A\left(
1\right)  }\geq0,~\left(  0\leq n\leq k\right)  ,~~k=0,1,2,...,$ and
$\gamma_{\mu}$ is defined by $\left(  \ref{1b}\right)  $ and $A\left(
t\right)  $ is given as in $\left(  \ref{***}\right)  .$

Note that in the case of $\mu=0$ the operator $\left(  \ref{appell-dunkl-s-d}%
\right)  $ gives the operator $\left(  \ref{2}\right)  $, and for $A\left(
t\right)  =1$ the operator $\left(  \ref{appell-dunkl-s-d}\right)  $ reduces
to the operator $\left(  \ref{dunkl-s-d}\right)  .$

We organize the paper as follows. In section 2, we give some lemmas and obtain
the convergence of the operators $\left(  \ref{appell-dunkl-s-d}\right)  $
with the help of universal Korovkin-type theorem. In section 3, we compute the
rates of convergence of the operators $D_{n}^{\ast}\left(  f\right)  $ to $f$
by means of the usual and second modulus of continuity and Lipschitz class functions.

\section{Approximation properties of the operators $D_{n}^{\ast}$}

In what follows, we first give some lemmas and then prove the main theorem
with the help of the well-known Korovkin Theorem.

\begin{lemma}
\label{lemma 1}From the generating function (\ref{dunkl-appell}), the
following equalities are satisfied%
\begin{equation}%
{\displaystyle\sum\limits_{k=0}^{\infty}}
\frac{p_{k}\left(  nx\right)  }{\gamma_{\mu}\left(  k\right)  }=A\left(
1\right)  e_{\mu}\left(  nx\right)  ,\label{1}%
\end{equation}%
\begin{align}
&
{\displaystyle\sum\limits_{k=0}^{\infty}}
\frac{p_{k+1}\left(  nx\right)  }{\gamma_{\mu}\left(  k\right)  }=\left(
nx\right)  A\left(  1\right)  e_{\mu}\left(  nx\right) \nonumber\\
& +\mu e_{\mu}\left(  -nx\right)  \left[  A\left(  1\right)  -A\left(
-1\right)  \right]  +A^{\prime}\left(  1\right)  e_{\mu}\left(  nx\right)
\label{2.2}%
\end{align}
and%
\begin{align}
&
{\displaystyle\sum\limits_{k=0}^{\infty}}
\frac{p_{k+2}\left(  nx\right)  }{\gamma_{\mu}\left(  k\right)  }=n^{2}%
x^{2}A\left(  1\right)  e_{\mu}\left(  nx\right)  +2nxe_{\mu}\left(
nx\right)  A^{\prime}\left(  1\right) \nonumber\\
& +2\mu e_{\mu}\left(  -nx\right)  \left[  A^{\prime}\left(  1\right)
-\frac{A\left(  1\right)  -A\left(  -1\right)  }{2}\right] \nonumber\\
& +A^{\prime\prime}\left(  1\right)  e_{\mu}\left(  nx\right)  .\label{2.3*}%
\end{align}

\end{lemma}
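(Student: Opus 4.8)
The plan is to derive all three identities from the single generating relation (\ref{dunkl-appell}) by repeatedly applying the Dunkl operator $T_{\mu}$ in the variable $t$, which acts as a raising operator on the summation index. The first identity (\ref{1}) is immediate: setting $t=1$ in (\ref{dunkl-appell}) gives $\sum_{k}\frac{p_{k}(x)}{\gamma_{\mu}(k)}=A(1)e_{\mu}(x)$, and replacing $x$ by $nx$ yields (\ref{1}).

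For (\ref{2.2}) I would first record that by (\ref{b}) the Dunkl operator in $t$ satisfies $T_{\mu}t^{k}=\frac{\gamma_{\mu}(k)}{\gamma_{\mu}(k-1)}t^{k-1}$, so applying $T_{\mu}$ termwise to the series $\sum_{k}\frac{p_{k}(x)}{\gamma_{\mu}(k)}t^{k}$ shifts the index up by one:
\[
T_{\mu}\Big[\sum_{k}\frac{p_{k}(x)}{\gamma_{\mu}(k)}t^{k}\Big]=\sum_{k}\frac{p_{k+1}(x)}{\gamma_{\mu}(k)}t^{k},
\]
the interchange being justified by absolute convergence for $|t|<R$. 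On the right-hand side of (\ref{dunkl-appell}) I apply the Dunkl product rule (\ref{c}) to $T_{\mu}[A(t)e_{\mu}(xt)]$ with $f=A$ and $g(t)=e_{\mu}(xt)$, using the $t$-analogue of (\ref{a}), namely $T_{\mu}e_{\mu}(xt)=xe_{\mu}(xt)$. This produces $A(t)\,xe_{\mu}(xt)+e_{\mu}(-xt)\,T_{\mu}A(t)+A'(t)[e_{\mu}(xt)-e_{\mu}(-xt)]$. Evaluating at $t=1$ and using the definition of $T_{\mu}$ to write $T_{\mu}A(1)=A'(1)+\mu[A(1)-A(-1)]$, then substituting $x\to nx$, collapses the expression to exactly (\ref{2.2}).

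For (\ref{2.3*}) I apply $T_{\mu}$ in $t$ a second time, so that $\sum_{k}\frac{p_{k+2}(x)}{\gamma_{\mu}(k)}t^{k}=T_{\mu}^{2}[A(t)e_{\mu}(xt)]$, and expand again with the product rule, now also using $T_{\mu}^{2}e_{\mu}(xt)=x^{2}e_{\mu}(xt)$ together with $T_{\mu}A(1)$ and $T_{\mu}(T_{\mu}A)(1)$ expressed through $A'(1)$, $A''(1)$ and $A(1)-A(-1)$. A quick check on the leading term confirms the route: $A(t)\,T_{\mu}^{2}e_{\mu}(xt)$ at $t=1$ with $x\to nx$ gives $n^{2}x^{2}A(1)e_{\mu}(nx)$, matching the first term of (\ref{2.3*}). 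Setting $t=1$ and $x\to nx$ throughout then yields (\ref{2.3*}).

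The main obstacle I anticipate is the bookkeeping in this last step. Because $e_{\mu}(xt)$ is not even in $t$, each application of $T_{\mu}$ via (\ref{c}) splits off both an $e_{\mu}(xt)$ and an $e_{\mu}(-xt)$ contribution, while the non-local part of $T_{\mu}$ acting on $A$ repeatedly generates the combination $\mu[A(1)-A(-1)]$; verifying that, after iterating, all of these reorganize into precisely the three-line right-hand side of (\ref{2.3*}) — in particular producing the coefficient $\tfrac12$ inside $A'(1)-\frac{A(1)-A(-1)}{2}$ — is the delicate part of the calculation, the rest being routine.
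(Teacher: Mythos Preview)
Your proposal is correct and follows exactly the route the paper indicates: the paper's proof merely says to set $t=1$, $x\to nx$ for (\ref{1}) and then ``apply the Dunkl operator $T_{\mu}$ to both sides of (\ref{dunkl-appell}), using (\ref{a}), (\ref{b}) and (\ref{c})'' for (\ref{2.2}) and (\ref{2.3*}), which is precisely your plan of differentiating in $t$, using (\ref{b}) on the series side to shift the index and the product rule (\ref{c}) together with $T_{\mu}e_{\mu}(xt)=xe_{\mu}(xt)$ on the generating-function side. Your explicit evaluation at $t=1$ for (\ref{2.2}) checks out, and the anticipated bookkeeping for the second application is exactly what the paper leaves to the reader.
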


\begin{proof}
Taking $t\rightarrow1,$ $x\rightarrow nx$ in (\ref{dunkl-appell}), we get the
first one. When we apply the Dunkl operator $T_{\mu}$ to both of sides of the
equality $\left(  \ref{dunkl-appell}\right)  $, by using the relations
(\ref{a}), (\ref{b}) and (\ref{c}) we obtain the second and third relations.
\end{proof}

\begin{lemma}
\label{lemma 2.2}For the operators $D_{n}^{\ast},\ $one can have%
\[
D_{n}^{\ast}\left(  1;x\right)  =1
\]%
\[
D_{n}^{\ast}\left(  t;x\right)  =x+\frac{\mu}{n}\left\{  \frac{e_{\mu}\left(
-nx\right)  }{e_{\mu}\left(  nx\right)  }\left[  \frac{A\left(  1\right)
-A\left(  -1\right)  }{A\left(  1\right)  }\right]  \right\}  +\frac{1}%
{n}\left[  \frac{A^{^{\prime}}\left(  1\right)  }{A\left(  1\right)  }%
+\lambda+1\right]
\]%
\begin{align}
D_{n}^{\ast}\left(  t^{2};x\right)   & =x^{2}+\frac{x}{n}\left\{  2\mu
\frac{e_{\mu}\left(  -nx\right)  }{e_{\mu}\left(  nx\right)  }\frac{A\left(
-1\right)  }{A\left(  1\right)  }+\frac{2A^{^{\prime}}\left(  1\right)
}{A\left(  1\right)  }+2\lambda+4\right\} \label{2.0-c}\\
& +\frac{2\mu}{n^{2}}\frac{e_{\mu}\left(  -nx\right)  }{e_{\mu}\left(
nx\right)  }\frac{A^{^{\prime}}\left(  1\right)  +A^{^{\prime}}\left(
-1\right)  }{A\left(  1\right)  }+\frac{2\mu^{2}}{n^{2}}\left[  \frac{A\left(
1\right)  -A\left(  -1\right)  }{A\left(  1\right)  }\right] \nonumber\\
& +\frac{\mu}{n^{2}}\left(  2\lambda+3\right)  \frac{e_{\mu}\left(
-nx\right)  }{e_{\mu}\left(  nx\right)  }\left[  \frac{A\left(  1\right)
-A\left(  -1\right)  }{A\left(  1\right)  }\right] \nonumber\\
& +\frac{1}{n^{2}}\left[  \frac{A^{\prime\prime}\left(  1\right)  }{A\left(
1\right)  }+\left(  2\lambda+4\right)  \frac{A^{\prime}\left(  1\right)
}{A\left(  1\right)  }+\left(  \lambda+1\right)  \left(  \lambda+2\right)
\right]  .\nonumber
\end{align}

\end{lemma}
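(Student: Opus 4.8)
The plan is to reduce each moment to a series in the Dunkl--Appell polynomials that Lemma~\ref{lemma 1} already evaluates, the only analytic input being the Gamma integral $\int_{0}^{\infty}e^{-nt}t^{s}\,dt=\Gamma(s+1)/n^{s+1}$. Applying this with $f(t)=t^{j}$, $j=0,1,2$, the Durrmeyer integral in (\ref{appell-dunkl-s-d}) collapses and the factor $n^{k+2\mu\theta_{k}+\lambda+1}$ cancels against all but $n^{-j}$, leaving
\[
D_{n}^{\ast}(t^{j};x)=\frac{1}{n^{j}e_{\mu}(nx)A(1)}\sum_{k=0}^{\infty}\frac{p_{k}(nx)}{\gamma_{\mu}(k)}\,\frac{\Gamma(m_{k}+\lambda+j+1)}{\Gamma(m_{k}+\lambda+1)},\qquad m_{k}:=k+2\mu\theta_{k}.
\]
Since the Gamma ratio equals the polynomial $\prod_{i=1}^{j}(m_{k}+\lambda+i)$ in $m_{k}$, everything is reduced to evaluating the sums $\sum_{k}\frac{p_{k}(nx)}{\gamma_{\mu}(k)}m_{k}^{\,r}$ for $r=0,1,2$.

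For $D_{n}^{\ast}(1;x)$ the ratio is $1$ and the claim is immediate from (\ref{1}). For $D_{n}^{\ast}(t;x)$ the ratio is $m_{k}+\lambda+1$; the constant part is handled by (\ref{1}), while for the $m_{k}$ part I would use the recursion (\ref{1c}) in the form $\frac{m_{k}}{\gamma_{\mu}(k)}=\frac{1}{\gamma_{\mu}(k-1)}$ (the $k=0$ term drops since $m_{0}=0$). After the shift $k\mapsto k+1$ this turns $\sum_{k}\frac{p_{k}}{\gamma_{\mu}(k)}m_{k}$ into $\sum_{k}\frac{p_{k+1}}{\gamma_{\mu}(k)}$, which is exactly (\ref{2.2}); collecting terms and dividing by $A(1)e_{\mu}(nx)$ gives the stated formula.

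The genuine work is the second moment, where the ratio is $m_{k}^{2}+(2\lambda+3)m_{k}+(\lambda+1)(\lambda+2)$, so one must in addition evaluate $\sum_{k}\frac{p_{k}(nx)}{\gamma_{\mu}(k)}m_{k}^{2}$. Here (\ref{1c}) absorbs only a single factor of $m_{k}$, and the natural double absorption $\frac{p_{k}m_{k-1}m_{k}}{\gamma_{\mu}(k)}=\frac{p_{k}}{\gamma_{\mu}(k-2)}$ produces the product $m_{k-1}m_{k}=\gamma_{\mu}(k)/\gamma_{\mu}(k-2)$, which matches the $p_{k+2}$ series (\ref{2.3*}) but \emph{not} $m_{k}^{2}$. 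I would therefore write $m_{k}^{2}=m_{k-1}m_{k}+m_{k}(m_{k}-m_{k-1})$ and use $\theta_{k}-\theta_{k-1}=2\theta_{k}-1$ to obtain $m_{k}-m_{k-1}=1-2\mu+4\mu\theta_{k}$. The $m_{k-1}m_{k}$ term reproduces (\ref{2.3*}), the constant $1-2\mu$ term reproduces $(1-2\mu)$ times (\ref{2.2}), and there remains the parity-weighted sum $4\mu\sum_{k}\frac{p_{k}(nx)}{\gamma_{\mu}(k)}m_{k}\theta_{k}=4\mu\sum_{k\ \text{odd}}\frac{p_{k}(nx)}{\gamma_{\mu}(k-1)}$.

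This odd-index sum is the main obstacle, and it is the source of the $e_{\mu}(-nx)$, $A(-1)$ and $A'(-1)$ contributions in (\ref{2.0-c}). To evaluate it I would return to the generating identity $\sum_{k}\frac{p_{k}(nx)}{\gamma_{\mu}(k)}t^{k}=A(t)e_{\mu}(nxt)$ and note, via (\ref{b}), that $t\,T_{\mu}^{(t)}$ acts on $t^{k}$ with eigenvalue $m_{k}$, whence $\sum_{k}\frac{p_{k}(nx)}{\gamma_{\mu}(k)}m_{k}t^{k}=t\,T_{\mu}^{(t)}\big(A(t)e_{\mu}(nxt)\big)$; this I would compute from the product rule (\ref{c}) and the eigenrelation (\ref{a}) exactly as in the proof of Lemma~\ref{lemma 1}. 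Extracting the odd part through $\tfrac12\big[\,\cdot\,|_{t=1}-\,\cdot\,|_{t=-1}\big]$ then evaluates the required sum, the $t=-1$ evaluation being precisely what introduces $e_{\mu}(-nx)$, $A(-1)$ and $A'(-1)$. Substituting all three sums back, multiplying by $1/(n^{2}e_{\mu}(nx)A(1))$ and gathering by powers of $1/n$ yields (\ref{2.0-c}); I expect the only delicate bookkeeping to be the cancellation of the stray $-2\mu$ in the coefficient $2\lambda+4-2\mu$ of $\sum_{k}\frac{p_{k+1}}{\gamma_{\mu}(k)}$ against the $+2\mu$ arising from the odd-index contribution, which is what leaves the clean coefficient $2\lambda+4$ of $x/n$.
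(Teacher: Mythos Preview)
Your approach is essentially the paper's: evaluate the Gamma integral, use the recursion $m_{k}/\gamma_{\mu}(k)=1/\gamma_{\mu}(k-1)$ to shift indices, and invoke the identities of Lemma~\ref{lemma 1}. For $j=0,1$ your argument and the paper's coincide verbatim.

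For $j=2$ the paper simply writes ``similarly, by means of (\ref{1}), (\ref{2.2}) and (\ref{2.3*})'' and stops. You go further and correctly isolate the genuine difficulty: two applications of the recursion produce $m_{k}m_{k-1}$, not $m_{k}^{2}$, and the defect $m_{k}(m_{k}-m_{k-1})=m_{k}\bigl(1-2\mu(-1)^{k}\bigr)$ forces a parity-weighted sum $\sum_{k}(-1)^{k}\frac{p_{k}(nx)m_{k}}{\gamma_{\mu}(k)}$ that is \emph{not} literally contained in (\ref{1})--(\ref{2.3*}). Your remedy---evaluate $T_{\mu}^{(t)}\bigl(A(t)e_{\mu}(nxt)\bigr)$ at $t=-1$ as well as $t=1$---is exactly right and is, in fact, the same computation that produced (\ref{2.2}), just read at the other point; this is what the paper is tacitly assuming under ``similarly''. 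So your proof is correct and, on the second moment, more complete than the paper's sketch. One small wording fix: after the shift $k\mapsto k+1$ the surviving sum $4\mu\sum_{k\ \text{odd}}\frac{p_{k}}{\gamma_{\mu}(k-1)}$ is over \emph{even} $j$ in $\sum_{j}\frac{p_{j+1}}{\gamma_{\mu}(j)}$, so the parity extraction is $\tfrac12[\,\cdot\,|_{t=1}+\,\cdot\,|_{t=-1}]$ rather than the difference; this does not affect the outcome but is worth stating correctly.
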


\begin{proof}
For $f\left(  t\right)  =1$ in the operator (\ref{appell-dunkl-s-d}), we have
\begin{align*}
D_{n}^{\ast}\left(  1;x\right)   & =\frac{1}{e_{\mu}\left(  nx\right)
A\left(  1\right)  }\sum_{k=0}^{\infty}\frac{p_{k}\left(  nx\right)  }%
{\gamma_{\mu}\left(  k\right)  }\frac{n^{k+2\mu\theta_{k}+\lambda+1}}%
{\Gamma\left(  k+2\mu\theta_{k}+\lambda+1\right)  }%
{\displaystyle\int\limits_{0}^{\infty}}
e^{-nt}t^{^{k+2\mu\theta_{k}+\lambda}}f\left(  t\right)  dt\\
& =\frac{1}{e_{\mu}\left(  nx\right)  A\left(  1\right)  }\sum_{k=0}^{\infty
}\frac{p_{k}\left(  nx\right)  }{\gamma_{\mu}\left(  k\right)  },
\end{align*}
from $\left(  \ref{1}\right)  ,$ it follows $D_{n}^{\ast}\left(  1;x\right)
=1. $ For $f\left(  t\right)  =t,$ the operator (\ref{appell-dunkl-s-d})
reduces to
\begin{align}
D_{n}^{\ast}\left(  t;x\right)   & =\frac{1}{e_{\mu}\left(  nx\right)
A\left(  1\right)  }\sum_{k=0}^{\infty}\frac{p_{k}\left(  nx\right)  }%
{\gamma_{\mu}\left(  k\right)  }\frac{n^{k+2\mu\theta_{k}+\lambda+1}}%
{\Gamma\left(  k+2\mu\theta_{k}+\lambda+1\right)  }%
{\displaystyle\int\limits_{0}^{\infty}}
e^{-nt}t^{^{k+2\mu\theta_{k}+\lambda+1}}dt\nonumber\\
& =\frac{1}{e_{\mu}\left(  nx\right)  A\left(  1\right)  }\sum_{k=0}^{\infty
}\frac{p_{k}\left(  nx\right)  }{\gamma_{\mu}\left(  k\right)  }%
\frac{n^{k+2\mu\theta_{k}+\lambda+1}}{\Gamma\left(  k+2\mu\theta_{k}%
+\lambda+1\right)  }\frac{\Gamma\left(  k+2\mu\theta_{k}+\lambda+2\right)
}{n^{k+2\mu\theta_{k}+\lambda+2}}\nonumber\\
& =\frac{1}{ne_{\mu}\left(  nx\right)  A\left(  1\right)  }\sum_{k=0}^{\infty
}\frac{p_{k}\left(  nx\right)  }{\gamma_{\mu}\left(  k\right)  }\left(
k+2\mu\theta_{k}+\lambda+1\right) \nonumber\\
& =\frac{1}{ne_{\mu}\left(  nx\right)  A\left(  1\right)  }\sum_{k=0}^{\infty
}\frac{p_{k+1}\left(  nx\right)  }{\gamma_{\mu}\left(  k\right)  }%
+\frac{\lambda+1}{ne_{\mu}\left(  nx\right)  A\left(  1\right)  }\sum
_{k=0}^{\infty}\frac{p_{k}\left(  nx\right)  }{\gamma_{\mu}\left(  k\right)
}.\nonumber
\end{align}
By considering the equalities $\left(  \ref{1}\right)  $ and $\left(
\ref{2.2}\right)  ,$ we obtain%
\begin{align*}
& D_{n}^{\ast}\left(  t;x\right) \\
& =\frac{\left(  nx\right)  A\left(  1\right)  e_{\mu}\left(  nx\right)  +\mu
e_{\mu}\left(  -nx\right)  \left[  A\left(  1\right)  -A\left(  -1\right)
\right]  +A^{\prime}\left(  1\right)  e_{\mu}\left(  nx\right)  }{ne_{\mu
}\left(  nx\right)  A\left(  1\right)  }+\frac{\lambda+1}{n}\\
& =x+\frac{\mu}{n}\left\{  \frac{e_{\mu}\left(  -nx\right)  }{e_{\mu}\left(
nx\right)  }\left[  \frac{A\left(  1\right)  -A\left(  -1\right)  }{A\left(
1\right)  }\right]  \right\}  +\frac{1}{n}\left\{  \frac{A^{\prime}\left(
1\right)  }{A\left(  1\right)  }+\lambda+1\right\}  .
\end{align*}
Similarly, for $f\left(  t\right)  =t^{2},$ by means of the equalities
(\ref{1}), (\ref{2.2}) and (\ref{2.3*}), it is seen that the equality
(\ref{2.0-c}) holds.
\end{proof}

\begin{lemma}
\label{lemma 2.3} For each $x\in\left[  0,\infty\right)  ,$ it follows from
the results in Lemma \ref{lemma 2.2}
\begin{align*}
\Omega_{n}^{1}(x)  & :=D_{n}^{\ast}\left(  \left(  t-x\right)  ;x\right) \\
& =\frac{\mu}{n}\left\{  \frac{e_{\mu}\left(  -nx\right)  }{e_{\mu}\left(
nx\right)  }\left[  \frac{A\left(  1\right)  -A\left(  -1\right)  }{A\left(
1\right)  }\right]  \right\}  +\frac{1}{n}\left\{  \frac{A^{\prime}\left(
1\right)  }{A\left(  1\right)  }+\lambda+1\right\}  ,
\end{align*}%
\begin{align}
\Omega_{n}^{2}(x)  & :=D_{n}^{\ast}\left(  \left(  t-x\right)  ^{2};x\right)
\nonumber\\
& =\frac{2x}{n}\left\{  1+\mu\frac{e_{\mu}\left(  -nx\right)  }{e_{\mu}\left(
nx\right)  }\left(  \frac{2A\left(  -1\right)  -A\left(  1\right)  }{A\left(
1\right)  }\right)  \right\} \nonumber\\
& +\frac{1}{n^{2}}\frac{e_{\mu}\left(  -nx\right)  }{e_{\mu}\left(  nx\right)
}\left\{  2\mu\frac{A^{\prime}\left(  1\right)  +A^{\prime}\left(  -1\right)
}{A\left(  1\right)  }+\mu\left(  2\lambda+3\right)  \left(  \frac{A\left(
1\right)  -A\left(  -1\right)  }{A\left(  1\right)  }\right)  \right\}
\nonumber\\
& +\frac{1}{n^{2}}\left\{  \frac{A^{\prime\prime}\left(  1\right)  }{A\left(
1\right)  }+2\left(  \lambda+2\right)  \frac{A^{\prime}\left(  1\right)
}{A\left(  1\right)  }+\left(  \lambda+1\right)  \left(  \lambda+2\right)
\right\} \nonumber\\
& +\frac{2\mu^{2}}{n^{2}}\left(  \frac{A\left(  1\right)  -A\left(  -1\right)
}{A\left(  1\right)  }\right)  .\label{2.7*}%
\end{align}

\end{lemma}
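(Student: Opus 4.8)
The plan is to exploit the linearity of $D_n^*$ together with the three moment evaluations already established in Lemma \ref{lemma 2.2}, so that no new integral needs to be computed. Writing $(t-x) = t - x\cdot 1$ and $(t-x)^2 = t^2 - 2x\,t + x^2\cdot 1$ and applying linearity reduces both central moments to linear combinations of $D_n^*(1;x)$, $D_n^*(t;x)$, and $D_n^*(t^2;x)$, all of which are known. For $\Omega_n^1(x)$ this is immediate: since $D_n^*(1;x)=1$, we have $\Omega_n^1(x) = D_n^*(t;x) - x$, and subtracting the leading term $x$ from the formula for $D_n^*(t;x)$ leaves precisely the stated expression.

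For $\Omega_n^2(x)$ the starting identity is $\Omega_n^2(x) = D_n^*(t^2;x) - 2x\,D_n^*(t;x) + x^2$. First I would verify that the quadratic-in-$x$ contributions cancel: the $x^2$ inside $D_n^*(t^2;x)$, the term $-2x\cdot x$ from the leading part of $-2x\,D_n^*(t;x)$, and the final $+x^2$ sum to zero. The genuinely nontrivial bookkeeping occurs at order $\frac{x}{n}$, where the coefficient $2\mu\frac{e_\mu(-nx)}{e_\mu(nx)}\frac{A(-1)}{A(1)} + \frac{2A'(1)}{A(1)} + 2\lambda + 4$ inherited from $D_n^*(t^2;x)$ must be combined with $-2x$ times the order-$\frac{1}{n}$ part of $D_n^*(t;x)$. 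The rational pieces involving $\frac{A'(1)}{A(1)}+\lambda$ cancel and leave a clean $\frac{2x}{n}$, while the exponential pieces collapse through the identity $\frac{A(-1)}{A(1)} - \frac{A(1)-A(-1)}{A(1)} = \frac{2A(-1)-A(1)}{A(1)}$, producing exactly the bracket in the first line of $\left(\ref{2.7*}\right)$. Since $D_n^*(t;x)$ carries no term of order $\frac{1}{n^2}$, the factor $-2x\,D_n^*(t;x)$ contributes nothing at that order, so every $\frac{1}{n^2}$ term of $\Omega_n^2(x)$ is inherited verbatim from $D_n^*(t^2;x)$.

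The main difficulty is thus organizational rather than conceptual: one must keep the four families of terms---those carrying the factor $\frac{e_\mu(-nx)}{e_\mu(nx)}$, those proportional to $\mu^2$, those involving $(2\lambda+3)$, and the purely rational $\frac{1}{n^2}$ block---properly separated while performing the cancellations at orders $x^2$ and $\frac{x}{n}$. Once the elementary identity $\frac{A(-1)}{A(1)} - \frac{A(1)-A(-1)}{A(1)} = \frac{2A(-1)-A(1)}{A(1)}$ is isolated, collecting like terms yields $\left(\ref{2.7*}\right)$ and completes the proof.
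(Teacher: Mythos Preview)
Your proposal is correct and follows exactly the approach the paper indicates: the paper gives no separate proof for this lemma, merely noting that it ``follows from the results in Lemma \ref{lemma 2.2}'', which is precisely the linearity-plus-moment computation you carry out. Your bookkeeping at orders $x^2$, $\tfrac{x}{n}$, and $\tfrac{1}{n^2}$ is accurate, including the identification of $\tfrac{A(-1)}{A(1)}-\tfrac{A(1)-A(-1)}{A(1)}=\tfrac{2A(-1)-A(1)}{A(1)}$ that produces the first bracket of $\left(\ref{2.7*}\right)$.
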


\begin{theorem}
Let $D_{n}^{\ast}$ be the operators given by $\left(  \ref{appell-dunkl-s-d}%
\right)  .$ Then, for any $f\in C\left[  0,\infty\right)  \cap E,$ the
following relation holds%
\[
\lim_{n\rightarrow\infty}D_{n}^{\ast}\left(  f;x\right)  =f\left(  x\right)  ,
\]
uniformly on each compact subset of $\left[  0,\infty\right)  ,$ where%
\[
E:=\left\{  f:~x\in\left[  0,\infty\right)  ,\frac{~f\left(  x\right)
}{1+x^{2}}\text{ is convergent as }x\rightarrow\infty\right\}  .
\]

\end{theorem}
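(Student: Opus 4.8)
The plan is to invoke a universal Korovkin-type theorem for positive linear operators acting on the weighted space $C[0,\infty)\cap E$, and reduce the whole statement to checking the three test functions $e_{0}(t)=1,$ $e_{1}(t)=t,$ $e_{2}(t)=t^{2}.$ Since $D_{n}^{\ast}$ is manifestly linear and, by the sign hypothesis $\tfrac{a_{k-n}}{A(1)}\geq0$ together with the positivity of the Dunkl coefficients $\gamma_{\mu}(k)$ and of the kernel $e^{-nt}t^{k+2\mu\theta_{k}+\lambda},$ is a positive operator, the Korovkin machinery applies. Thus it suffices to prove that $D_{n}^{\ast}(e_{i};x)\to e_{i}(x)$ uniformly on each compact subset of $[0,\infty)$ for $i=0,1,2,$ and the conclusion for arbitrary $f\in C[0,\infty)\cap E$ follows automatically.

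First I would record, directly from Lemma \ref{lemma 2.2}, that $D_{n}^{\ast}(1;x)=1$ holds identically, so the case $i=0$ is trivial and in fact requires no limit. For $i=1$ and $i=2$ I would read off the explicit expressions in Lemma \ref{lemma 2.2} and isolate the leading terms $x$ and $x^{2}$; equivalently, I would work with the central-moment expressions $\Omega_{n}^{1}(x)$ and $\Omega_{n}^{2}(x)$ furnished by Lemma \ref{lemma 2.3}, since $D_{n}^{\ast}(t;x)-x=\Omega_{n}^{1}(x)$ and $D_{n}^{\ast}(t^{2};x)-x^{2}=\Omega_{n}^{2}(x)+2x\,\Omega_{n}^{1}(x).$ The point is that every correction term appearing in these formulas carries an explicit factor $\tfrac{1}{n}$ or $\tfrac{1}{n^{2}},$ while the remaining ingredients are either fixed constants built from $A(1),A(-1),A'(\pm1),A''(1),\lambda,\mu$ or the bounded ratio $\tfrac{e_{\mu}(-nx)}{e_{\mu}(nx)},$ possibly multiplied by $x.$

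The one genuinely delicate step is to make the convergence \emph{uniform} on a compact set, and this hinges on controlling the ratio $\tfrac{e_{\mu}(-nx)}{e_{\mu}(nx)}$ independently of $n.$ Here I would use that, for $y\geq0,$
\[
\left\vert e_{\mu}(-y)\right\vert
=\left\vert \sum_{k=0}^{\infty}\frac{(-y)^{k}}{\gamma_{\mu}(k)}\right\vert
\leq\sum_{k=0}^{\infty}\frac{y^{k}}{\gamma_{\mu}(k)}
=e_{\mu}(y),
\]
so that $\bigl\vert \tfrac{e_{\mu}(-nx)}{e_{\mu}(nx)}\bigr\vert\leq1$ for all $x\geq0$ and all $n.$ Consequently, on a fixed compact interval $[0,b]$ each bracketed combination in Lemma \ref{lemma 2.3} is bounded by a constant depending only on $A,\lambda,\mu,b,$ whence $\sup_{x\in[0,b]}\vert\Omega_{n}^{1}(x)\vert=O(1/n)$ and $\sup_{x\in[0,b]}\vert\Omega_{n}^{2}(x)\vert=O(1/n),$ the factor $x$ being absorbed by $b.$ Therefore $D_{n}^{\ast}(e_{i};x)\to e_{i}(x)$ uniformly on $[0,b]$ for $i=0,1,2.$

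Finally I would assemble the three verified limits and apply the universal Korovkin theorem on $C[0,\infty)\cap E$ to conclude that $D_{n}^{\ast}(f;x)\to f(x)$ uniformly on every compact subset of $[0,\infty)$ for each $f\in C[0,\infty)\cap E.$ I expect the only subtlety to be the uniform boundedness argument for $\tfrac{e_{\mu}(-nx)}{e_{\mu}(nx)}$ described above; once that bound is in place the remaining estimates are routine order-of-magnitude bookkeeping in powers of $1/n.$
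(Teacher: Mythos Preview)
Your proposal is correct and follows the same route as the paper: verify $D_{n}^{\ast}(t^{i};x)\to x^{i}$ for $i=0,1,2$ uniformly on compacta via Lemma~\ref{lemma 2.2}, then invoke the universal Korovkin-type theorem of Altomare--Campiti. In fact you are more explicit than the paper, which simply asserts the uniform convergence of the test-function images without isolating the bound $\bigl|\tfrac{e_{\mu}(-nx)}{e_{\mu}(nx)}\bigr|\le 1$; your inclusion of that estimate is the right way to close the argument.
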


\begin{proof}
From the results in Lemma \ref{lemma 2.2}%
\[
\lim_{n\rightarrow\infty}D_{n}^{\ast}\left(  t^{i};x\right)  =x^{i},~i=0,1,2,
\]
holds where the convergence holds uniformly in each compact subset of $\left[
0,\infty\right)  .$ Then, applying the universal Korovkin type Theorem 4.1.4
(vi) given in \cite{altomare} gives the desired result.
\end{proof}

\section{Rates of Convergence}

In this part, we calculate the order of approximation by means of the usual
and second modulus of continuity and Lipschitz class functions. First of all,
we recall some definitions as follows.

Let $f\in\widetilde{C}[0,\infty)$ and $\delta>0.$ The modulus of continuity of
$f$ denoted by $\omega\left(  f;\delta\right)  $ is defined by
\[
\omega\left(  f;\delta\right)  :=\sup_{\substack{x,y\in\lbrack0,\infty)
\\\left\vert x-y\right\vert \leq\delta}}\left\vert f\left(  x\right)
-f\left(  y\right)  \right\vert
\]
where $\widetilde{C}[0,\infty)~$is the space of uniformly continuous functions
on $[0,\infty).~$Then$,$ for any $\delta>0$ and each $x\in\lbrack0,\infty)$,
we have the following inequality%
\begin{equation}
\left\vert f\left(  x\right)  -f\left(  y\right)  \right\vert \leq
\omega\left(  f;\delta\right)  \left(  \frac{\left\vert x-y\right\vert
}{\delta}+1\right)  .\label{a8}%
\end{equation}

Let $C_{B}\left[  0,\infty\right)  ~$be the class of real valued functions
defined on $\left[  0,\infty\right)  $ which are bounded and uniformly
continuous with the norm $\left\Vert f\right\Vert _{C_{B}}=\sup_{x\in\left[
0,\infty\right)  }\left\vert f\left(  x\right)  \right\vert .$ The second
modulus of continuity of $f\in C_{B}\left[  0,\infty\right)  $ is defined by%
\[
\omega_{2}\left(  f;\delta\right)  :=\sup_{0<t\leq\delta}\left\Vert f\left(
.+2t\right)  -2f\left(  .+t\right)  +f\left(  .\right)  \right\Vert _{C_{B}}.
\]
Now, let us give the following definitions.

\begin{definition}
Let $f$ be a real valued continuous function defined on $[0,\infty).$ Then $f$
is said to be Lipschitz continuous of order $\gamma$ on $[0,\infty)$ if%
\[
\left\vert f\left(  x\right)  -f\left(  y\right)  \right\vert \leq M\left\vert
x-y\right\vert ^{\gamma}%
\]
for $x,y\in\lbrack0,\infty)$ with $M>0$ and $0<\gamma\leq1.$ The set of
Lipschitz continuous functions is denoted by Lip$_{M}\left(  \gamma\right)  .$
\end{definition}

\begin{definition}
\cite{ditzian-totik} Peetre's $K$-functional of the function $f\in
C_{B}\left[  0,\infty\right)  $ is defined by%
\begin{equation}
K\left(  f;\delta\right)  :=\inf_{g\in C_{B}^{2}\left[  0,\infty\right)
}\left\{  \left\Vert f-g\right\Vert _{_{C_{B}}}+\delta\left\Vert g\right\Vert
_{C_{B}^{2}}\right\} \label{a11}%
\end{equation}
where%
\[
C_{B}^{2}\left[  0,\infty\right)  :=\left\{  g\in C_{B}\left[  0,\infty
\right)  :g^{\prime},\text{ }g^{\prime\prime}\in C_{B}\left[  0,\infty\right)
\right\}
\]
and the norm%
\[
\left\Vert g\right\Vert _{C_{B}^{2}}:=\left\Vert g\right\Vert _{C_{B}%
}+\left\Vert g^{\prime}\right\Vert _{C_{B}}+\left\Vert g^{\prime\prime
}\right\Vert _{C_{B}}.
\]
It is clear that the following inequality%
\begin{equation}
K\left(  f;\delta\right)  \leq M\left\{  \omega_{2}\left(  f;\sqrt{\delta
}\right)  +\min\left(  1,\delta\right)  \left\Vert f\right\Vert _{C_{B}%
}\right\} \label{a14}%
\end{equation}
holds for all $\delta>0.$ The constant $M$ is independent of $f$ and $\delta.$
\end{definition}

\begin{theorem}
\label{theorem 3.1}For $f\in\widetilde{C}[0,\infty)\cap E,$ we have
\end{theorem}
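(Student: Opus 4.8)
The plan is to exploit the fact that $D_n^{\ast}$ is a positive linear operator which reproduces constants. Since $D_n^{\ast}(1;x)=1$ by Lemma \ref{lemma 2.2}, I would first write
\[
\left| D_n^{\ast}(f;x) - f(x)\right| = \left| D_n^{\ast}\bigl(f(t)-f(x);x\bigr)\right| \leq D_n^{\ast}\bigl(\left|f(t)-f(x)\right|;x\bigr),
\]
using positivity to bring the absolute value inside the operator. Next I would invoke the modulus-of-continuity estimate (\ref{a8}) with $y=t$, namely $\left|f(t)-f(x)\right| \leq \omega(f;\delta)\left(\delta^{-1}\left|t-x\right|+1\right)$, and apply $D_n^{\ast}$ term by term. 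Because $D_n^{\ast}(1;x)=1$, this yields
\[
\left| D_n^{\ast}(f;x) - f(x)\right| \leq \omega(f;\delta)\left(1 + \frac{1}{\delta}\, D_n^{\ast}\bigl(\left|t-x\right|;x\bigr)\right).
\]

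The key step is to control the first absolute moment $D_n^{\ast}(|t-x|;x)$. Here I would apply the Cauchy--Schwarz inequality for the positive linear functional $f \mapsto D_n^{\ast}(f;x)$, obtaining
\[
D_n^{\ast}\bigl(\left|t-x\right|;x\bigr) \leq \sqrt{D_n^{\ast}\bigl((t-x)^2;x\bigr)}\,\sqrt{D_n^{\ast}(1;x)} = \sqrt{\Omega_n^2(x)},
\]
where $\Omega_n^2(x)$ is the explicit quantity computed in Lemma \ref{lemma 2.3}, equation (\ref{2.7*}). Substituting this bound and then optimizing by choosing $\delta = \sqrt{\Omega_n^2(x)}$ collapses the bracket to the constant $2$, producing
\[
\left| D_n^{\ast}(f;x) - f(x)\right| \leq 2\,\omega\!\left(f;\sqrt{\Omega_n^2(x)}\right),
\]
which I anticipate to be the content of the stated estimate.

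I expect the only genuinely delicate point to be the Cauchy--Schwarz step, since it requires $D_n^{\ast}$ to act as a \emph{positive} linear functional in $t$ for each fixed $x$. This positivity is guaranteed by the sign conditions $a_{k-n}/A(1)\geq 0$ imposed in the definition (\ref{appell-dunkl-s-d}): together with the nonnegativity of the Dunkl-binomial coefficients and of $(nx)^n$ for $x\geq 0$, they force each coefficient $p_k(nx)/(\gamma_\mu(k)A(1))$ to be nonnegative, while the integral kernel $e^{-nt}t^{k+2\mu\theta_k+\lambda}$ is manifestly nonnegative. Everything else reduces to a direct substitution of the moments supplied by Lemma \ref{lemma 2.2} and Lemma \ref{lemma 2.3}; no further estimation of the ratios $e_\mu(-nx)/e_\mu(nx)$ is needed at this stage, since they are already absorbed into $\Omega_n^2(x)$.
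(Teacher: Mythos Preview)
Your proposal is correct and follows essentially the same strategy as the paper: bound $|f(t)-f(x)|$ via the modulus-of-continuity inequality (\ref{a8}), control $D_n^{\ast}(|t-x|;x)$ by Cauchy--Schwarz in terms of $\sqrt{\Omega_n^2(x)}$, and then set $\delta=\sqrt{\Omega_n^2(x)}$. The only cosmetic difference is that the paper carries out Cauchy--Schwarz in two stages (first on the inner integral, then on the outer sum), whereas you apply it once at the level of the positive linear functional $f\mapsto D_n^{\ast}(f;x)$; the two are equivalent and yield the same bound.
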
%

\[
\left\vert D_{n}^{\ast}\left(  f;x\right)  -f\left(  x\right)  \right\vert
\leq2\omega\left(  f;\sqrt{\Omega_{n}^{2}\left(  x\right)  }\right)  ,
\]
where $\Omega_{n}^{2}$ is given as in Lemma \ref{lemma 2.3}.

\begin{proof}
From linearity and positivity of the operators $D_{n}^{\ast},$ by applying
(\ref{a8}), we get%
\begin{align}
& \left\vert D_{n}^{\ast}\left(  f;x\right)  -f\left(  x\right)  \right\vert
\nonumber\\
& \leq\frac{1}{e_{\mu}\left(  nx\right)  A\left(  1\right)  }\sum
_{k=0}^{\infty}\frac{p_{k}\left(  nx\right)  }{\gamma_{\mu}\left(  k\right)
}\frac{n^{k+2\mu\theta_{k}+\lambda+1}}{\Gamma\left(  k+2\mu\theta_{k}%
+\lambda+1\right)  }\nonumber\\
& \times%
{\displaystyle\int\limits_{0}^{\infty}}
e^{-nt}t^{^{k+2\mu\theta_{k}+\lambda}}\left\vert f\left(  t\right)  -f\left(
x\right)  \right\vert dt\nonumber\\
& \leq\frac{1}{e_{\mu}\left(  nx\right)  A\left(  1\right)  }\sum
_{k=0}^{\infty}\frac{p_{k}\left(  nx\right)  }{\gamma_{\mu}\left(  k\right)
}\frac{n^{k+2\mu\theta_{k}+\lambda+1}}{\Gamma\left(  k+2\mu\theta_{k}%
+\lambda+1\right)  }\label{2.7.a}\\
& \times%
{\displaystyle\int\limits_{0}^{\infty}}
e^{-nt}t^{^{k+2\mu\theta_{k}+\lambda}}\left(  \frac{\left\vert t-x\right\vert
}{\delta}+1\right)  \omega\left(  f;\delta\right)  dt\nonumber\\
& \leq\left\{  1+\frac{1}{\delta}\frac{1}{e_{\mu}\left(  nx\right)  A\left(
1\right)  }\sum_{k=0}^{\infty}\frac{p_{k}\left(  nx\right)  }{\gamma_{\mu
}\left(  k\right)  }\frac{n^{k+2\mu\theta_{k}+\lambda+1}}{\Gamma\left(
k+2\mu\theta_{k}+\lambda+1\right)  }\right. \nonumber\\
& \left.  \times%
{\displaystyle\int\limits_{0}^{\infty}}
e^{-nt}t^{^{k+2\mu\theta_{k}+\lambda}}\left\vert t-x\right\vert dt\right\}
\omega\left(  f;\delta\right)  .\nonumber
\end{align}
From the Cauchy-Schwarz inequality for integration, one may write%
\[%
{\displaystyle\int\limits_{0}^{\infty}}
e^{-nt}t^{^{k+2\mu\theta_{k}+\lambda}}\left\vert t-x\right\vert dt\leq\left(
\frac{\Gamma\left(  k+2\mu\theta_{k}+\lambda+1\right)  }{n^{k+2\mu\theta
_{k}+\lambda+1}}\right)  ^{1/2}\left(
{\displaystyle\int\limits_{0}^{\infty}}
e^{-nt}t^{^{k+2\mu\theta_{k}+\lambda}}\left(  t-x\right)  ^{2}dt\right)
^{1/2},
\]
by using this inequality, it follows that%
\begin{align}
& \sum_{k=0}^{\infty}\frac{p_{k}\left(  nx\right)  }{\gamma_{\mu}\left(
k\right)  }\frac{n^{k+2\mu\theta_{k}+\lambda+1}}{\Gamma\left(  k+2\mu
\theta_{k}+\lambda+1\right)  }%
{\displaystyle\int\limits_{0}^{\infty}}
e^{-nt}t^{^{k+2\mu\theta_{k}+\lambda}}\left\vert t-x\right\vert dt\nonumber\\
& \leq\sum_{k=0}^{\infty}\frac{p_{k}\left(  nx\right)  }{\gamma_{\mu}\left(
k\right)  }\left(  \frac{n^{k+2\mu\theta_{k}+\lambda+1}}{\Gamma\left(
k+2\mu\theta_{k}+\lambda+1\right)  }\right)  ^{^{1/2}}\left(
{\displaystyle\int\limits_{0}^{\infty}}
e^{-nt}t^{^{k+2\mu\theta_{k}+\lambda}}\left(  t-x\right)  ^{2}dt\right)
^{1/2}.\label{2.8}%
\end{align}
If we now apply Cauchy-Schwarz inequality for sum on the right hand side of
(\ref{2.8}), we get%
\begin{align}
& \sum_{k=0}^{\infty}\frac{p_{k}\left(  nx\right)  }{\gamma_{\mu}\left(
k\right)  }\frac{n^{k+2\mu\theta_{k}+\lambda+1}}{\Gamma\left(  k+2\mu
\theta_{k}+\lambda+1\right)  }%
{\displaystyle\int\limits_{0}^{\infty}}
e^{-nt}t^{^{k+2\mu\theta_{k}+\lambda}}\left\vert t-x\right\vert dt\nonumber\\
& \leq\sqrt{e_{\mu}\left(  nx\right)  A\left(  1\right)  }\left(  e_{\mu
}\left(  nx\right)  A\left(  1\right)  D_{n}^{\ast}\left(  \left(  t-x\right)
^{2};x\right)  \right)  ^{1/2}\nonumber\\
& =e_{\mu}\left(  nx\right)  A\left(  1\right)  \left(  D_{n}^{\ast}\left(
\left(  t-x\right)  ^{2};x\right)  \right)  ^{1/2}\nonumber\\
& =e_{\mu}\left(  nx\right)  A\left(  1\right)  \left(  \Omega_{n}^{2}\left(
x\right)  \right)  ^{1/2},\label{a9}%
\end{align}
where $\Omega_{n}^{2}\left(  x\right)  $ is as in the equality (\ref{2.7*}).
When we consider (\ref{a9}) in (\ref{2.7.a}), we obtain%
\[
\left\vert D_{n}^{\ast}\left(  f;x\right)  -f\left(  x\right)  \right\vert
\leq\left\{  1+\frac{1}{\delta}\sqrt{\Omega_{n}^{2}\left(  x\right)
}\right\}  \omega\left(  f;\delta\right)  .
\]
If we choose $\delta=\sqrt{\Omega_{n}^{2}\left(  x\right)  }$, we arrive at
\[
\left\vert D_{n}^{\ast}\left(  f;x\right)  -f\left(  x\right)  \right\vert
\leq2\omega\left(  f;\sqrt{\Omega_{n}^{2}\left(  x\right)  }\right)  .
\]

We note that $\Omega_{n}^{2}\left(  x\right)  $ goes to zero when
$n\rightarrow\infty$.
\end{proof}

\begin{theorem}
For $f\in Lip_{M}\left(  \alpha\right)  ,$ such that $0<\alpha\leq1,$ $M\in%
\mathbb{R}
^{+}$ we have
\end{theorem}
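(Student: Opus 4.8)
The plan is to exploit the positivity and linearity of $D_{n}^{\ast}$ together with the reproduction of constants, $D_{n}^{\ast}\left(1;x\right)=1$, established in Lemma \ref{lemma 2.2}. Writing $f\left(x\right)=f\left(x\right)D_{n}^{\ast}\left(1;x\right)=D_{n}^{\ast}\left(f\left(x\right);x\right)$, I would first reduce the claim to an estimate on the central moment functional:
\[
\left\vert D_{n}^{\ast}\left(f;x\right)-f\left(x\right)\right\vert =\left\vert D_{n}^{\ast}\left(f\left(t\right)-f\left(x\right);x\right)\right\vert \leq D_{n}^{\ast}\left(\left\vert f\left(t\right)-f\left(x\right)\right\vert ;x\right).
\]
Applying the Lipschitz hypothesis $\left\vert f\left(t\right)-f\left(x\right)\right\vert \leq M\left\vert t-x\right\vert ^{\alpha}$ inside the operator then gives
\[
\left\vert D_{n}^{\ast}\left(f;x\right)-f\left(x\right)\right\vert \leq M\,D_{n}^{\ast}\left(\left\vert t-x\right\vert ^{\alpha};x\right).
\]

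Next I would estimate the right-hand side by H\"older's inequality applied to the positive linear functional $D_{n}^{\ast}\left(\cdot;x\right)$, with conjugate exponents $p=\tfrac{2}{\alpha}$ and $q=\tfrac{2}{2-\alpha}$. Because $\alpha\leq1$ these are legitimate conjugate exponents, and since $D_{n}^{\ast}$ reproduces constants the estimate collapses to
\[
D_{n}^{\ast}\left(\left\vert t-x\right\vert ^{\alpha};x\right)\leq\left(D_{n}^{\ast}\left(\left(t-x\right)^{2};x\right)\right)^{\alpha/2}\left(D_{n}^{\ast}\left(1;x\right)\right)^{\left(2-\alpha\right)/2}=\left(\Omega_{n}^{2}\left(x\right)\right)^{\alpha/2},
\]
where the final identification uses $D_{n}^{\ast}\left(1;x\right)=1$ and the formula $\Omega_{n}^{2}\left(x\right)=D_{n}^{\ast}\left(\left(t-x\right)^{2};x\right)$ from Lemma \ref{lemma 2.3}. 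Combining the two displays yields the asserted bound $\left\vert D_{n}^{\ast}\left(f;x\right)-f\left(x\right)\right\vert \leq M\left(\Omega_{n}^{2}\left(x\right)\right)^{\alpha/2}$, and the convergence $\Omega_{n}^{2}\left(x\right)\to0$ noted at the end of Theorem \ref{theorem 3.1} shows the right-hand side vanishes as $n\rightarrow\infty$.

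The computation is essentially routine; the only point requiring care is the legitimacy of the H\"older step, which presupposes that $D_{n}^{\ast}$ really is a \emph{positive} linear functional. This is exactly where the standing sign conditions enter: the nonnegativity of the coefficients $\tfrac{a_{k-n}}{A\left(1\right)}\geq0$ guarantees $p_{k}\left(nx\right)/\gamma_{\mu}\left(k\right)\geq0$ for $x\geq0$, so each term in the defining series of (\ref{appell-dunkl-s-d}) is a nonnegative weight times a positive Durrmeyer integral. I expect no genuine obstacle beyond recording this positivity explicitly and noting that one may equivalently view $D_{n}^{\ast}\left(\cdot;x\right)$ as integration against a single mass-one measure (the discrete-in-$k$, continuous-in-$t$ weight of total mass $1$ by Lemma \ref{lemma 2.2}), so that the mixed sum-and-integral H\"older inequality applies verbatim without needing the two separate Cauchy--Schwarz applications used in the proof of Theorem \ref{theorem 3.1}.
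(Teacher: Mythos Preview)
Your argument is correct and follows essentially the same route as the paper: reduce via positivity and linearity to $M\,D_{n}^{\ast}(|t-x|^{\alpha};x)$, then apply H\"older with exponents $2/\alpha$ and $2/(2-\alpha)$ together with $D_{n}^{\ast}(1;x)=1$ and Lemma~\ref{lemma 2.3}. The only addition is your explicit justification of positivity and of the single-measure H\"older step, which the paper leaves implicit.
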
%

\[
\left\vert D_{n}^{\ast}\left(  f;x\right)  -f\left(  x\right)  \right\vert
\leq M\left(  \Omega_{n}^{2}\left(  x\right)  \right)  ^{\frac{\alpha}{2}},
\]
where $\Omega_{n}^{2}$ is given in Lemma \ref{lemma 2.3}.

\begin{proof}
Since $f\in Lip_{M}\left(  \alpha\right)  ,$ we can write from linearity%
\[
\left\vert D_{n}^{\ast}\left(  f;x\right)  -f\left(  x\right)  \right\vert
\leq D_{n}^{\ast}\left(  \left\vert f\left(  t\right)  -f\left(  x\right)
\right\vert ;x\right)  \leq MD_{n}^{\ast}\left(  \left\vert t-x\right\vert
^{\alpha};x\right)  .
\]
By taking into account Lemma \ref{lemma 2.3} and H\"{o}lder inequality, we get%
\[
\left\vert D_{n}^{\ast}\left(  f;x\right)  -f\left(  x\right)  \right\vert
\leq M\left(  \Omega_{n}^{2}\left(  x\right)  \right)  ^{\frac{\alpha}{2}},
\]
which ends the proof.
\end{proof}

Now, we give rate of convergence of the operators $D_{n}^{\ast}$ via Peetre's K-functional.

\begin{lemma}
\label{lemma 4} For any $g\in C_{B}^{2}\left[  0,\infty\right)  $, we have%
\[
\left\vert D_{n}^{\ast}\left(  g;x\right)  -g\left(  x\right)  \right\vert
\leq\lambda_{n}\left(  x\right)  \left\Vert g\right\Vert _{C_{B}^{2}\left[
0,\infty\right)  }%
\]
where%
\begin{equation}
\lambda_{n}\left(  x\right)  =\Omega_{n}^{1}(x)+\frac{\Omega_{n}^{2}(x)}%
{2}.\label{2.9}%
\end{equation}

\end{lemma}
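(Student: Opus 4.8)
The plan is to estimate the error on a smooth test function $g$ by a second-order Taylor expansion about the point $x$, and then to transfer the bound to the operator using its linearity together with the normalization $D_n^{\ast}(1;x)=1$ from Lemma \ref{lemma 2.2}. Concretely, for $g\in C_B^2[0,\infty)$ I would write, for each $t\ge 0$,
\[
g(t)=g(x)+(t-x)g'(x)+\int_x^t (t-u)\,g''(u)\,du .
\]
Applying $D_n^{\ast}(\cdot;x)$ to both sides and using linearity, the constant term contributes $g(x)\,D_n^{\ast}(1;x)=g(x)$, which cancels against the $-g(x)$ on the left-hand side; the linear term contributes $g'(x)\,D_n^{\ast}\!\left((t-x);x\right)=g'(x)\,\Omega_n^1(x)$ by the definition of $\Omega_n^1$ in Lemma \ref{lemma 2.3}; and the remainder contributes the term $D_n^{\ast}\!\left(\int_x^t(t-u)g''(u)\,du;\,x\right)$.

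Next I would bound each surviving piece. Since $|g''(u)|\le\|g''\|_{C_B}$ for every $u$, the integral remainder satisfies
\[
\left|\int_x^t (t-u)\,g''(u)\,du\right|\le \|g''\|_{C_B}\,\frac{(t-x)^2}{2},
\]
so by the positivity of $D_n^{\ast}$ this term is dominated by $\frac{1}{2}\|g''\|_{C_B}\,D_n^{\ast}\!\left((t-x)^2;x\right)=\frac{1}{2}\|g''\|_{C_B}\,\Omega_n^2(x)$, using the value of $\Omega_n^2$ from Lemma \ref{lemma 2.3}. For the linear term I would use $|g'(x)|\le\|g'\|_{C_B}$. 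Collecting the two estimates and invoking the crude bounds $\|g'\|_{C_B}\le\|g\|_{C_B^2}$ and $\|g''\|_{C_B}\le\|g\|_{C_B^2}$, which follow directly from the definition of the $C_B^2$ norm, I arrive at
\[
\left|D_n^{\ast}(g;x)-g(x)\right|\le \left(\Omega_n^1(x)+\frac{\Omega_n^2(x)}{2}\right)\|g\|_{C_B^2}=\lambda_n(x)\,\|g\|_{C_B^2},
\]
which is exactly the claimed inequality with $\lambda_n$ as in $(\ref{2.9})$.

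The one point that needs care is the sign of the first-moment term $\Omega_n^1(x)=D_n^{\ast}\!\left((t-x);x\right)$: in passing from $g'(x)\,\Omega_n^1(x)$ to $\|g'\|_{C_B}\,\Omega_n^1(x)$ I implicitly use $|\Omega_n^1(x)|=\Omega_n^1(x)$, i.e.\ the nonnegativity of the first central moment. Under the standing hypotheses $\mu,\lambda\ge 0$ together with the positivity assumption $a_{k-n}/A(1)\ge 0$ on the coefficients, each summand in the explicit formula for $\Omega_n^1$ is nonnegative (the ratio $e_{\mu}(-nx)/e_{\mu}(nx)$ is positive for $x\ge 0$, and the combinations $\frac{A(1)-A(-1)}{A(1)}$ and $\frac{A'(1)}{A(1)}$ are nonnegative because they are sums of terms $a_r/\bigl(\gamma_{\mu}(r)A(1)\bigr)\ge 0$), so indeed $\Omega_n^1(x)\ge 0$. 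If one preferred not to rely on this, the estimate would hold verbatim with $|\Omega_n^1(x)|$ in place of $\Omega_n^1(x)$, leaving the $O\!\left(\Omega_n^1+\Omega_n^2\right)$ content of the lemma unaffected. I expect this sign bookkeeping, rather than the Taylor estimate itself, to be the only delicate step.
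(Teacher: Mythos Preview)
Your argument is correct and follows essentially the same route as the paper: Taylor-expand $g$ about $x$, apply $D_n^{\ast}$ using linearity and $D_n^{\ast}(1;x)=1$, and bound the first and second central moments by $\|g'\|_{C_B}$ and $\|g''\|_{C_B}$ respectively. Your version is in fact slightly more careful than the paper's, which uses the Lagrange remainder $\tfrac{1}{2}g''(\xi)(t-x)^2$ with $\xi=\xi(t)$ and then pulls $g''(\xi)$ outside $D_n^{\ast}$ without comment; your integral-remainder formulation avoids that issue, and your explicit discussion of why $\Omega_n^1(x)\ge 0$ fills a gap the paper leaves implicit.
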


\begin{proof}
From the Taylor's series of the function $g\in C_{B}^{2}\left[  0,\infty
\right)  ,$ we can write%
\[
g\left(  t\right)  =g\left(  x\right)  +g^{\prime}\left(  x\right)  \left(
t-x\right)  +\left(  t-x\right)  ^{2}\frac{g^{\prime\prime}\left(  \xi\right)
}{2!},~\xi\in\left(  x,t\right)  .
\]
By operating by $D_{n}^{\ast}$ on both sides of this equality and then using
the linearity of the operator, we get%
\[
D_{n}^{\ast}\left(  g;x\right)  -g\left(  x\right)  =g^{\prime}\left(
x\right)  D_{n}^{\ast}\left(  \left(  t-x\right)  ;x\right)  +\frac
{g^{\prime\prime}\left(  \xi\right)  }{2}D_{n}^{\ast}\left(  \left(
t-x\right)  ^{2};x\right)  .
\]
By considering Lemma \ref{lemma 2.3}, one can have%
\begin{align*}
\left\vert D_{n}^{\ast}\left(  g;x\right)  -g\left(  x\right)  \right\vert  &
\leq\left\vert g^{\prime}\left(  x\right)  D_{n}^{\ast}\left(  \left(
t-x\right)  ;x\right)  \right\vert +\left\vert \frac{g^{\prime\prime}\left(
\xi\right)  }{2}D_{n}^{\ast}\left(  \left(  t-x\right)  ^{2};x\right)
\right\vert \\
& \leq\left\Vert g^{\prime}\right\Vert _{C_{B}\left[  0,\infty\right)  }%
\Omega_{n}^{1}(x)+\frac{1}{2}\left\Vert g^{\prime\prime}\right\Vert
_{C_{B}\left[  0,\infty\right)  }\Omega_{n}^{2}(x)\\
& \leq\left(  \Omega_{n}^{1}(x)+\frac{\Omega_{n}^{2}(x)}{2}\right)  \left\Vert
g\right\Vert _{C_{B}^{2}\left[  0,\infty\right)  }.
\end{align*}
So the proof is completed.
\end{proof}

\begin{theorem}
For any $f\in C_{B}\left[  0,\infty\right)  $, we have%
\[
\left\vert D_{n}^{\ast}\left(  f;x\right)  -f\left(  x\right)  \right\vert
\leq2M\left\{  \min\left(  1,\frac{\lambda_{n}\left(  x\right)  }{2}\right)
\left\Vert f\right\Vert _{C_{B}\left[  0,\infty\right)  }+\omega_{2}\left(
f;\sqrt{\frac{\lambda_{n}\left(  x\right)  }{2}}\right)  \right\}  ,
\]
where $M$ is a positive constant which is independent of $n$ and $\lambda
_{n}\left(  x\right)  $ given by $\left(  \ref{2.9}\right)  .$
\end{theorem}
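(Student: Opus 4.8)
The plan is to run the standard Peetre $K$-functional argument, exploiting that $D_n^{\ast}$ is linear, positive, and reproduces constants. First I would fix an arbitrary $g\in C_B^2[0,\infty)$ and insert it by the triangle inequality:
\[
\left\vert D_n^{\ast}(f;x)-f(x)\right\vert
\leq\left\vert D_n^{\ast}(f-g;x)\right\vert
+\left\vert (f-g)(x)\right\vert
+\left\vert D_n^{\ast}(g;x)-g(x)\right\vert .
\]
The first two terms are controlled using $D_n^{\ast}(1;x)=1$ from Lemma~\ref{lemma 2.2} together with positivity, which gives $\left\vert D_n^{\ast}(f-g;x)\right\vert\leq\|f-g\|_{C_B}$ and trivially $\left\vert (f-g)(x)\right\vert\leq\|f-g\|_{C_B}$, so their sum is at most $2\|f-g\|_{C_B}$. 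The third term is exactly what Lemma~\ref{lemma 4} estimates, namely $\left\vert D_n^{\ast}(g;x)-g(x)\right\vert\leq\lambda_n(x)\|g\|_{C_B^2}$.

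Combining these yields
\[
\left\vert D_n^{\ast}(f;x)-f(x)\right\vert
\leq 2\|f-g\|_{C_B}+\lambda_n(x)\|g\|_{C_B^2}
=2\left\{\|f-g\|_{C_B}+\frac{\lambda_n(x)}{2}\,\|g\|_{C_B^2}\right\}.
\]
Since this holds for every $g\in C_B^2[0,\infty)$, I would pass to the infimum over $g$ on the right-hand side. Matching the bracketed expression against the definition \eqref{a11} of Peetre's $K$-functional with parameter $\delta=\lambda_n(x)/2$ gives
\[
\left\vert D_n^{\ast}(f;x)-f(x)\right\vert
\leq 2\,K\!\left(f;\frac{\lambda_n(x)}{2}\right).
\]

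Finally I would apply the known bound \eqref{a14}, evaluated at $\delta=\lambda_n(x)/2$, to convert the $K$-functional into the second modulus of continuity, producing
\[
\left\vert D_n^{\ast}(f;x)-f(x)\right\vert
\leq 2M\left\{\omega_2\!\left(f;\sqrt{\frac{\lambda_n(x)}{2}}\right)
+\min\!\left(1,\frac{\lambda_n(x)}{2}\right)\|f\|_{C_B}\right\},
\]
which is the asserted inequality. There is no genuine obstacle here: the argument is a routine assembly of Lemma~\ref{lemma 4} and the $K$-functional estimate \eqref{a14}. The only point requiring mild care is the bookkeeping of the constant $2$ and the parameter $\lambda_n(x)/2$, so that factoring $2$ out of $2\|f-g\|_{C_B}+\lambda_n(x)\|g\|_{C_B^2}$ produces precisely the combination $\|f-g\|_{C_B}+\tfrac{\lambda_n(x)}{2}\|g\|_{C_B^2}$ defining $K(f;\lambda_n(x)/2)$, after which \eqref{a14} delivers the stated form verbatim.
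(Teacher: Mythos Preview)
Your proposal is correct and follows essentially the same route as the paper's own proof: the triangle decomposition with an auxiliary $g\in C_B^2[0,\infty)$, the bound $2\|f-g\|_{C_B}+\lambda_n(x)\|g\|_{C_B^2}$ via Lemma~\ref{lemma 4} and positivity, passage to the infimum to obtain $2K(f;\lambda_n(x)/2)$, and finally the application of \eqref{a14}. If anything, you are slightly more explicit than the paper in justifying $\left\vert D_n^{\ast}(f-g;x)\right\vert\leq\|f-g\|_{C_B}$ via $D_n^{\ast}(1;x)=1$.
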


\begin{proof}
Let $g\in C_{B}^{2}\left[  0,\infty\right)  .$ In view of Lemma \ref{lemma 4},
one can have%
\begin{align*}
\left\vert D_{n}^{\ast}\left(  f;x\right)  -f\left(  x\right)  \right\vert  &
\leq\left\vert D_{n}^{\ast}\left(  f-g;x\right)  \right\vert +\left\vert
D_{n}^{\ast}\left(  g;x\right)  -g\left(  x\right)  \right\vert +\left\vert
f\left(  x\right)  -g\left(  x\right)  \right\vert \\
& \leq2\left\Vert f-g\right\Vert _{C_{B}\left[  0,\infty\right)  }+\lambda
_{n}\left(  x\right)  \left\Vert g\right\Vert _{C_{B}^{2}\left[
0,\infty\right)  }\\
& =2\left\{  \left\Vert f-g\right\Vert _{C_{B}\left[  0,\infty\right)  }%
+\frac{\lambda_{n}\left(  x\right)  }{2}\left\Vert g\right\Vert _{C_{B}%
^{2}\left[  0,\infty\right)  }\right\}  ,
\end{align*}
from $\left(  \ref{a11}\right)  $ it follows%
\[
\left\vert D_{n}^{\ast}\left(  f;x\right)  -f\left(  x\right)  \right\vert
\leq2K\left(  f;\frac{\lambda_{n}\left(  x\right)  }{2}\right)  .
\]
By $\left(  \ref{a14}\right)  $, we obtain%
\[
\left\vert D_{n}^{\ast}\left(  f;x\right)  -f\left(  x\right)  \right\vert
\leq2M\left\{  \min\left(  1,\frac{\lambda_{n}\left(  x\right)  }{2}\right)
\left\Vert f\right\Vert _{C_{B}\left[  0,\infty\right)  }+\omega_{2}\left(
f;\sqrt{\frac{\lambda_{n}\left(  x\right)  }{2}}\right)  \right\}  ,
\]
which completes the proof.
\end{proof}

\end{document}